\newtheorem{theorem}{Theorem}
\theoremstyle{plain}
\newtheorem{corollary}{Corollary}
\newtheorem{lemma}{Lemma}
\newtheorem{proposition}{Proposition}
\newtheorem{remark}{Remark}
\numberwithin{equation}{section}
\begin{document}
\title[Inequalities For $\log -$ Convex Functions]{Inequalities for $\log -$%
convex functions via three times differentiability}
\author{Merve Avc\i\ Ard\i \c{c}$^{\clubsuit ,\bigstar }$}
\address{$^{\clubsuit }$ADIYAMAN UNIVERSITY, FACULTY OF SCIENCE AND ARTS,
DEPARTMENT OF MATHEMATICS, 02040 ADIYAMAN}
\email{merveavci@ymail.com}
\thanks{$^{\bigstar }$Corresponding Author}
\author{M. Emin \"{O}zdemir$^{\diamondsuit }$}
\email{emos@atauni.edu.tr}
\address{$^{\diamondsuit }$ATAT\"{U}RK UNIVERSITY, K.K EDUCATION FACULTY,
DEPARTMENT OF MATHEMATICS, 25240, ERZURUM}
\keywords{$\log -$convex functions, Convexity, Hermite-Hadamard inequality, H%
\"{o}lder's integral inequality, Power-mean integral inequality.}

\begin{abstract}
In this paper, we obtain some new integral inequalities like
Hermite-Hadamard type for third derivatives absolute value are $\log -$%
convex. \ We give some applications to quadrature formula for midpoint error
estimate.
\end{abstract}

\maketitle

\section{INTRODUCTION}

We shall recall the definitions of convex functions and $\log -$convex
functions:

Let $I$ be an interval in $%
\mathbb{R}
.$ Then $f:I\rightarrow 
\mathbb{R}
$ is said to be convex if for all on $x,y\in I$ \ and all $\alpha \in
\lbrack 0,1],$%
\begin{equation}
f(\alpha x+(1-\alpha )y)\leq \alpha f(x)+(1-\alpha )f(y)  \label{1.1}
\end{equation}

holds. If (\ref{1.1}) is strict for all $x\neq y$ and $\alpha \in (0,1),$
then $f$ is said to be strictly convex. If the inequality in (\ref{1.1}) is
reversed, then $f$ is said to be concave. If it is strict for all $x\neq y$
and $\alpha \in (0,1),$ then $f$ is said to be strictly concave.

A function is called $\log -$convex or multiplicatively convex on a real
interval $I=[a,b],$ if $\log f$ is convex , or, equivalently if for all $%
x,y\in I$ \ and all $\alpha \in \lbrack 0,1],$%
\begin{equation}
f(\alpha x+(1-\alpha )y)\leq f(x)^{\alpha }+f(y)^{(1-\alpha )}.  \label{1.2}
\end{equation}%
It is said to be $\log -$concave if the inequality in (\ref{1.2}) is
reversed.

For some results for $\log -$convex functions see \cite{1}-\cite{4}.

The following inequality is called Hermite-Hadamard inequality for convex
functions:

Let $f:I\subseteq 
\mathbb{R}
\rightarrow 
\mathbb{R}
$ be a convex function on the interval $I$ of real numbers and $a,b\in I$
with $a<b.$ Then double inequality

\begin{equation*}
f\left( \frac{a+b}{2}\right) \leq \frac{1}{b-a}\int_{a}^{b}f(x)dx\leq \frac{%
f(a)+f(b)}{2}
\end{equation*}%
holds.

The main purpose of this paper is  to obtain some new integral inequalities
like Hermite-Hadamard type for third derivatives absolute value are $\log -$%
convex.

In order to prove our main results for $\log -$convex functios we need the
following Lemma from \cite{5}:

\begin{lemma}
\label{lem 1.1} Let $f:I\subset 
\mathbb{R}
\rightarrow 
\mathbb{R}
$ be a three times differentiable mapping on $I^{\circ }$ and $a,b\in
I^{\circ }$ with $a<b.$ If $f^{\left( 3\right) }\in L_{1}([a,b]),$ then%
\begin{eqnarray*}
&&\frac{1}{b-a}\int_{a}^{b}f(x)dx-f\left( \frac{a+b}{2}\right) -\frac{\left(
b-a\right) ^{2}}{24}f^{\prime \prime }\left( \frac{a+b}{2}\right) \\
&=&\frac{\left( b-a\right) ^{3}}{96}\left[ \int_{0}^{1}t^{3}f^{\left(
3\right) }\left( \frac{t}{2}a+\frac{2-t}{2}b\right)
dt-\int_{0}^{1}t^{3}f^{\left( 3\right) }\left( \frac{2-t}{2}a+\frac{t}{2}%
b\right) dt\right] .
\end{eqnarray*}
\end{lemma}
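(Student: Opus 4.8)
The plan is to start from the right-hand side and reduce each of the two integrals to the quantities appearing on the left by repeated integration by parts. Write $m=\frac{a+b}{2}$ and denote
$$I_{1}=\int_{0}^{1}t^{3}f^{(3)}\left(\frac{t}{2}a+\frac{2-t}{2}b\right)dt,\qquad I_{2}=\int_{0}^{1}t^{3}f^{(3)}\left(\frac{2-t}{2}a+\frac{t}{2}b\right)dt,$$
so that the right-hand side is $\frac{(b-a)^{3}}{96}\left(I_{1}-I_{2}\right)$. The inner arguments are affine in $t$: in $I_{1}$ the argument $\alpha(t)=\frac{t}{2}a+\frac{2-t}{2}b$ has constant derivative $\alpha'(t)=\frac{a-b}{2}$ and runs from $b$ (at $t=0$) to $m$ (at $t=1$), while in $I_{2}$ the argument $\beta(t)=\frac{2-t}{2}a+\frac{t}{2}b$ has $\beta'(t)=\frac{b-a}{2}$ and runs from $a$ to $m$. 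The essential feature is that $I_{2}$ is obtained from $I_{1}$ by the exchange $a\leftrightarrow b$, so the two computations are identical up to sign changes stemming from $\alpha'=-\beta'$.

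First I would integrate $I_{1}$ by parts three times, each time differentiating the polynomial factor and antidifferentiating the derivative factor by means of $\int f^{(k)}(\alpha(t))\,dt=\frac{1}{\alpha'}f^{(k-1)}(\alpha(t))$. Since $t^{3}$, $t^{2}$, and $t$ all vanish at $t=0$, every boundary term is evaluated only at $t=1$, i.e. at the midpoint $m$; this produces one term in $f''(m)$, one in $f'(m)$, and one in $f(m)$, together with a residual integral $\int_{0}^{1}f(\alpha(t))\,dt$. The change of variable $x=\alpha(t)$ turns this residual into $\frac{2}{b-a}\int_{m}^{b}f(x)\,dx$. Performing the same three integrations by parts for $I_{2}$ yields the analogous expression with $b-a$ in place of $a-b$ throughout and residual integral $\frac{2}{b-a}\int_{a}^{m}f(x)\,dx$.

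Finally I would subtract and collect terms. The cleanest point is that the $f'(m)$ contributions carry the same sign in $I_{1}$ and $I_{2}$ (they involve $1/(a-b)^{2}=1/(b-a)^{2}$) and hence cancel in $I_{1}-I_{2}$; the $f''(m)$ and $f(m)$ contributions, which change sign between the two, add up; and the two half-interval residuals combine into $\frac{96}{(b-a)^{4}}\int_{a}^{b}f$. Multiplying the resulting combination by $\frac{(b-a)^{3}}{96}$ reproduces exactly $\frac{1}{b-a}\int_{a}^{b}f-f(m)-\frac{(b-a)^{2}}{24}f''(m)$, which is the left-hand side. The only real obstacle is bookkeeping: keeping the many accumulated factors of $\frac{2}{a-b}$ and $\frac{2}{b-a}$ and their powers straight, and getting the signs right when reversing the orientation of the residual integrals. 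Organizing the computation around the antisymmetry $a\leftrightarrow b$, and using the vanishing of the $f'(m)$ terms as a consistency check, is the natural way to confirm that the algebra is correct.
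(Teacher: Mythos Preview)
Your proposal is correct: three successive integrations by parts on each of $I_{1}$ and $I_{2}$, exploiting the vanishing of the polynomial factors at $t=0$ and the $a\leftrightarrow b$ antisymmetry, give exactly the identity, with the $f'(m)$ terms cancelling and the half-interval residuals combining into $\frac{96}{(b-a)^{4}}\int_{a}^{b}f$. Note, however, that the paper does not supply its own proof of this lemma; it is quoted from reference~[5] (Shuang, Wang and Qi), where the argument is precisely this repeated integration by parts, so your approach coincides with the original one.
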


\section{INEQUALITIES FOR $\log -$CONVEX FUNCTIONS}

We shall start the following result:

\begin{theorem}
\label{teo 2.1} Let $f:I\rightarrow \lbrack 0,\infty ),$ be a three times
differentiable mapping on $I^{\circ \text{ }}$ such that $f^{\prime \prime
\prime }\in L[a,b]$ where $a,b$ $\in I^{\circ }$ with $a<b.$ If $\left\vert
f^{\prime \prime \prime }\right\vert $ is $\log -$convex on $[a,b]$ , then
the following inequality holds:%
\begin{eqnarray*}
&&\left\vert \frac{1}{b-a}\int_{a}^{b}f(x)dx-f\left( \frac{a+b}{2}\right) -%
\frac{\left( b-a\right) ^{2}}{24}f^{\prime \prime }\left( \frac{a+b}{2}%
\right) \right\vert \\
&\leq &\frac{\left( b-a\right) ^{3}}{96}\left\{ \left\vert f^{\prime \prime
\prime }(b)\right\vert \mu _{K}+\left\vert f^{\prime \prime \prime
}(a)\right\vert \mu _{M}\right\}
\end{eqnarray*}%
where 
\begin{equation*}
\mu _{K}=\frac{2K^{\frac{1}{2}}\left( \ln K-6\right) }{\left( \ln K\right)
^{2}}+\frac{48K^{\frac{1}{2}}\left( \ln K-2\right) }{\left( \ln K\right) ^{4}%
}+\frac{96}{\left( \ln K\right) ^{4}},
\end{equation*}%
\begin{equation*}
\mu _{M}=\frac{2M^{\frac{1}{2}}\left( \ln M-6\right) }{\left( \ln M\right)
^{2}}+\frac{48M^{\frac{1}{2}}\left( \ln M-2\right) }{\left( \ln M\right) ^{4}%
}+\frac{96}{\left( \ln M\right) ^{4}}
\end{equation*}%
and%
\begin{equation*}
K=\frac{\left\vert f^{\prime \prime \prime }(a)\right\vert }{\left\vert
f^{\prime \prime \prime }(b)\right\vert },\text{ }M=\frac{\left\vert
f^{\prime \prime \prime }(b)\right\vert }{\left\vert f^{\prime \prime \prime
}(a)\right\vert }.
\end{equation*}
\end{theorem}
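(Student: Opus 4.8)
The plan is to begin from the identity in Lemma~\ref{lem 1.1}, pass to absolute values, and control the two resulting integrals separately. Applying the triangle inequality to the right-hand side of that Lemma gives
\begin{align*}
&\left\vert \frac{1}{b-a}\int_{a}^{b}f(x)\,dx-f\left( \frac{a+b}{2}\right) -\frac{(b-a)^{2}}{24}f^{\prime \prime }\left( \frac{a+b}{2}\right) \right\vert \\
&\leq \frac{(b-a)^{3}}{96}\left[ \int_{0}^{1}t^{3}\left\vert f^{\prime \prime \prime }\left( \tfrac{t}{2}a+\tfrac{2-t}{2}b\right) \right\vert dt+\int_{0}^{1}t^{3}\left\vert f^{\prime \prime \prime }\left( \tfrac{2-t}{2}a+\tfrac{t}{2}b\right) \right\vert dt\right],
\end{align*}
so the entire problem reduces to estimating these two integrals.

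Next I would exploit the $\log$-convexity of $\left\vert f^{\prime \prime \prime }\right\vert $. Note that $\tfrac{t}{2}a+\tfrac{2-t}{2}b$ is the convex combination of $a$ and $b$ with weights $\tfrac{t}{2}$ and $1-\tfrac{t}{2}$, so by (\ref{1.2}) (read, as it must be for $\log$-convexity, with the \emph{product} $\left\vert f^{\prime \prime \prime }(x)\right\vert ^{\alpha }\left\vert f^{\prime \prime \prime }(y)\right\vert ^{1-\alpha }$ on the right)
\begin{equation*}
\left\vert f^{\prime \prime \prime }\left( \tfrac{t}{2}a+\tfrac{2-t}{2}b\right) \right\vert \leq \left\vert f^{\prime \prime \prime }(a)\right\vert ^{t/2}\left\vert f^{\prime \prime \prime }(b)\right\vert ^{1-t/2}=\left\vert f^{\prime \prime \prime }(b)\right\vert K^{t/2},
\end{equation*}
and symmetrically $\left\vert f^{\prime \prime \prime }\left( \tfrac{2-t}{2}a+\tfrac{t}{2}b\right) \right\vert \leq \left\vert f^{\prime \prime \prime }(a)\right\vert M^{t/2}$, with $K$ and $M$ as in the statement. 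Substituting these bounds and pulling the constants $\left\vert f^{\prime \prime \prime }(b)\right\vert $ and $\left\vert f^{\prime \prime \prime }(a)\right\vert $ out of the respective integrals reduces the claim to verifying the two identities $\mu _{K}=\int_{0}^{1}t^{3}K^{t/2}\,dt$ and $\mu _{M}=\int_{0}^{1}t^{3}M^{t/2}\,dt$.

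The remaining and most technical step is to evaluate $\int_{0}^{1}t^{3}K^{t/2}\,dt$ in closed form. Writing $K^{t/2}=e^{\lambda t}$ with $\lambda =\tfrac{\ln K}{2}$, I would integrate $\int_{0}^{1}t^{3}e^{\lambda t}\,dt$ by parts three times to obtain
\begin{equation*}
\int_{0}^{1}t^{3}e^{\lambda t}\,dt=\frac{\lambda ^{3}e^{\lambda }-3\lambda ^{2}e^{\lambda }+6\lambda e^{\lambda }-6e^{\lambda }+6}{\lambda ^{4}},
\end{equation*}
then resubstitute $\lambda =\tfrac{\ln K}{2}$ (so that $e^{\lambda }=K^{1/2}$) and collect all terms over the common denominator $(\ln K)^{4}$; a direct simplification reproduces exactly the expression for $\mu _{K}$. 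The same computation with $K$ replaced by $M$ yields $\mu _{M}$, and reinstating the factor $\tfrac{(b-a)^{3}}{96}$ gives the asserted inequality.

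The main obstacle is purely computational: carrying out the repeated integration by parts cleanly and then doing the algebraic bookkeeping that confirms the collected fractions genuinely coincide with the stated forms of $\mu _{K}$ and $\mu _{M}$. The only conceptual point to watch is the factorization in the second paragraph, which requires the multiplicative (product) form of the $\log$-convexity inequality rather than the additive form as literally typeset in (\ref{1.2}).
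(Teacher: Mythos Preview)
Your proposal is correct and follows essentially the same route as the paper's own proof: start from Lemma~\ref{lem 1.1}, apply the modulus, use $\log$-convexity to bound $\left\vert f^{\prime\prime\prime}\right\vert$ at the convex combinations by $\left\vert f^{\prime\prime\prime}(b)\right\vert K^{t/2}$ and $\left\vert f^{\prime\prime\prime}(a)\right\vert M^{t/2}$, and then evaluate $\int_{0}^{1}t^{3}K^{t/2}\,dt$. The paper stops at this point with ``the proof is completed by making use of the necessary computation,'' whereas you actually carry out the integration by parts and verify that the closed form matches $\mu_{K}$; your observation that (\ref{1.2}) must be read with a product rather than a sum is also accurate.
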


\begin{proof}
From Lemma \ref{lem 1.1}, property of the modulus and $\log -$convexity of $%
\left\vert f^{\prime \prime \prime }\right\vert $ we have%
\begin{eqnarray*}
&&\left\vert \frac{1}{b-a}\int_{a}^{b}f(x)dx-f\left( \frac{a+b}{2}\right) -%
\frac{\left( b-a\right) ^{2}}{24}f^{\prime \prime }\left( \frac{a+b}{2}%
\right) \right\vert \\
&\leq &\frac{\left( b-a\right) ^{3}}{96}\left\{ \int_{0}^{1}t^{3}\left\vert
f^{\prime \prime \prime }\left( \frac{t}{2}a+\frac{2-t}{2}b\right)
\right\vert dt+\int_{0}^{1}t^{3}\left\vert f^{\prime \prime \prime }\left( 
\frac{t}{2}b+\frac{2-t}{2}a\right) \right\vert dt\right\} \\
&\leq &\frac{\left( b-a\right) ^{3}}{96}\left\{ \int_{0}^{1}t^{3}\left\vert
f^{\prime \prime \prime }(a)\right\vert ^{\frac{t}{2}}\left\vert f^{\prime
\prime \prime }(b)\right\vert ^{1-\frac{t}{2}}dt+\int_{0}^{1}t^{3}\left\vert
f^{\prime \prime \prime }(b)\right\vert ^{\frac{t}{2}}\left\vert f^{\prime
\prime \prime }(a)\right\vert ^{1-\frac{t}{2}}dt\right\} \\
&=&\frac{\left( b-a\right) ^{3}}{96}\left\{ \left\vert f^{\prime \prime
\prime }(b)\right\vert \int_{0}^{1}t^{3}\left[ \frac{\left\vert f^{\prime
\prime \prime }(a)\right\vert }{\left\vert f^{\prime \prime \prime
}(b)\right\vert }\right] ^{\frac{t}{2}}dt+\left\vert f^{\prime \prime \prime
}(a)\right\vert \int_{0}^{1}t^{3}\left[ \frac{\left\vert f^{\prime \prime
\prime }(b)\right\vert }{\left\vert f^{\prime \prime \prime }(a)\right\vert }%
\right] ^{\frac{t}{2}}dt\right\} .
\end{eqnarray*}%
The proof is completed by making use of the neccessary computation.
\end{proof}

\begin{corollary}
\label{co 1.1.} Let $\mu _{K},\mu _{M},K$ and $\ M$ be defined as in Theorem %
\ref{teo 2.1}. If we choose $f^{\prime \prime }\left( \frac{a+b}{2}\right)
=0 $ in Theorem \ref{teo 2.1}, we obtain the following inequality%
\begin{eqnarray*}
&&\left\vert \frac{1}{b-a}\int_{a}^{b}f(x)dx-f\left( \frac{a+b}{2}\right)
\right\vert \\
&\leq &\frac{\left( b-a\right) ^{3}}{96}\left\{ \left\vert f^{\prime \prime
\prime }(b)\right\vert \mu _{K}+\left\vert f^{\prime \prime \prime
}(a)\right\vert \mu _{M}\right\} .
\end{eqnarray*}
\end{corollary}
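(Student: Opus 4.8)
The plan is to derive this corollary as an immediate specialization of Theorem \ref{teo 2.1}, requiring no fresh estimation whatsoever. First I would invoke the conclusion of Theorem \ref{teo 2.1} verbatim, which furnishes the bound on the quantity $\left\vert \frac{1}{b-a}\int_{a}^{b}f(x)\,dx-f\left( \frac{a+b}{2}\right) -\frac{\left( b-a\right) ^{2}}{24}f^{\prime \prime }\left( \frac{a+b}{2}\right) \right\vert $ by the right-hand expression $\frac{\left( b-a\right) ^{3}}{96}\left\{ \left\vert f^{\prime \prime \prime }(b)\right\vert \mu _{K}+\left\vert f^{\prime \prime \prime }(a)\right\vert \mu _{M}\right\} $. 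The crucial observation is that the constants $\mu _{K},\mu _{M},K,M$ are expressed solely in terms of $\left\vert f^{\prime \prime \prime }(a)\right\vert $ and $\left\vert f^{\prime \prime \prime }(b)\right\vert $, so they are unaffected by any additional hypothesis placed on $f^{\prime \prime }$; in particular, the entire right-hand side of the estimate carries over without any change.

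Second, I would impose the stated assumption $f^{\prime \prime }\left( \frac{a+b}{2}\right) =0$. Under this hypothesis the correction term $\frac{\left( b-a\right) ^{2}}{24}f^{\prime \prime }\left( \frac{a+b}{2}\right) $ vanishes identically, so the absolute value appearing on the left-hand side of the theorem collapses to $\left\vert \frac{1}{b-a}\int_{a}^{b}f(x)\,dx-f\left( \frac{a+b}{2}\right) \right\vert $. Substituting this simplified left-hand side into the bound already established by the theorem yields precisely the inequality asserted in the corollary.

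There is essentially no obstacle in this argument: the corollary amounts to nothing more than deleting one summand from the left-hand side of an inequality proved earlier, while the right-hand side remains untouched. The only point that warrants a moment of verification is that all the hypotheses of Theorem \ref{teo 2.1} — namely three-times differentiability of $f$ on $I^{\circ }$, the integrability $f^{\prime \prime \prime }\in L[a,b]$, and the $\log $-convexity of $\left\vert f^{\prime \prime \prime }\right\vert $ on $[a,b]$ — are inherited intact, so that the theorem genuinely applies and its estimate may be quoted directly.
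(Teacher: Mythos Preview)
Your proposal is correct and matches the paper's approach exactly: the corollary is stated without a separate proof in the paper, precisely because it follows immediately from Theorem \ref{teo 2.1} by setting $f^{\prime \prime }\left( \frac{a+b}{2}\right)=0$, which drops the $\frac{(b-a)^{2}}{24}f^{\prime \prime }\left( \frac{a+b}{2}\right)$ term from the left-hand side while leaving the right-hand side unchanged.
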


\begin{theorem}
\label{teo 2.2} Let $f:I\rightarrow \lbrack 0,\infty ),$ be a three times
differentiable mapping on $I^{\circ \text{ }}$ such that $f^{\prime \prime
\prime }\in L[a,b]$ where $a,b$ $\in I^{\circ }$ with $a<b.$ If $\left\vert
f^{\prime \prime \prime }\right\vert ^{q}$ is $\log -$convex on $[a,b]$ ,
then the following inequality holds for some fixed $q>1$%
\begin{eqnarray*}
&&\left\vert \frac{1}{b-a}\int_{a}^{b}f(x)dx-f\left( \frac{a+b}{2}\right) -%
\frac{\left( b-a\right) ^{2}}{24}f^{\prime \prime }\left( \frac{a+b}{2}%
\right) \right\vert \\
&\leq &\frac{\left( b-a\right) ^{3}}{96}\left( \frac{1}{3p+1}\right) ^{\frac{%
1}{p}}\left\{ \left\vert f^{\prime \prime \prime }(b)\right\vert \left( 
\frac{2}{q\ln K}\left[ K^{\frac{q}{2}}-1\right] \right) ^{\frac{1}{q}%
}+\left\vert f^{\prime \prime \prime }(a)\right\vert \left( \frac{2}{q\ln M}%
\left[ M^{\frac{q}{2}}-1\right] \right) ^{\frac{1}{q}}\right\}
\end{eqnarray*}%
where $K=\frac{\left\vert f^{\prime \prime \prime }(a)\right\vert }{%
\left\vert f^{\prime \prime \prime }(b)\right\vert },$ $M=\frac{\left\vert
f^{\prime \prime \prime }(b)\right\vert }{\left\vert f^{\prime \prime \prime
}(a)\right\vert }$ and $\frac{1}{p}+\frac{1}{q}=1.$
\end{theorem}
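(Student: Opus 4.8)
The plan is to open exactly as in the proof of Theorem~\ref{teo 2.1}: start from Lemma~\ref{lem 1.1}, apply the triangle inequality to the two integrals on the right, and obtain
\[
\left\vert \frac{1}{b-a}\int_{a}^{b}f(x)dx-f\left( \frac{a+b}{2}\right) -\frac{(b-a)^{2}}{24}f^{\prime \prime }\left( \frac{a+b}{2}\right) \right\vert \leq \frac{(b-a)^{3}}{96}\left\{ I_{1}+I_{2}\right\},
\]
where $I_{1}=\int_{0}^{1}t^{3}\left\vert f^{\prime \prime \prime }\left( \frac{t}{2}a+\frac{2-t}{2}b\right) \right\vert dt$ and $I_{2}=\int_{0}^{1}t^{3}\left\vert f^{\prime \prime \prime }\left( \frac{t}{2}b+\frac{2-t}{2}a\right) \right\vert dt$. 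The essential new step, in place of the direct bound used before, is to apply H\"{o}lder's integral inequality to each $I_{j}$ with conjugate exponents $p,q$, splitting the integrand as $t^{3}\cdot\left\vert f^{\prime \prime \prime }(\cdots)\right\vert$. For $I_{1}$ this gives
\[
I_{1}\leq \left( \int_{0}^{1}t^{3p}dt\right) ^{\frac{1}{p}}\left( \int_{0}^{1}\left\vert f^{\prime \prime \prime }\left( \tfrac{t}{2}a+\tfrac{2-t}{2}b\right) \right\vert ^{q}dt\right) ^{\frac{1}{q}},
\]
and the first factor is elementary: $\int_{0}^{1}t^{3p}dt=\frac{1}{3p+1}$, which produces the prefactor $\left( \frac{1}{3p+1}\right) ^{1/p}$.

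For the remaining factor I would invoke the hypothesis that $\left\vert f^{\prime \prime \prime }\right\vert ^{q}$ is $\log$-convex. Writing the argument as $\alpha a+(1-\alpha )b$ with $\alpha =\frac{t}{2}$, inequality~(\ref{1.2}) applied to $h=\left\vert f^{\prime \prime \prime }\right\vert ^{q}$ yields $\left\vert f^{\prime \prime \prime }\left( \tfrac{t}{2}a+\tfrac{2-t}{2}b\right) \right\vert ^{q}\leq \left\vert f^{\prime \prime \prime }(a)\right\vert ^{qt/2}\left\vert f^{\prime \prime \prime }(b)\right\vert ^{q(1-t/2)}=\left\vert f^{\prime \prime \prime }(b)\right\vert ^{q}K^{qt/2}$. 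The surviving integral is then a pure exponential, evaluated via $\int_{0}^{1}K^{qt/2}dt=\frac{2}{q\ln K}\left[ K^{q/2}-1\right]$; raising to the power $1/q$ produces exactly $\left\vert f^{\prime \prime \prime }(b)\right\vert \left( \frac{2}{q\ln K}\left[ K^{q/2}-1\right] \right) ^{1/q}$.

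The integral $I_{2}$ is handled symmetrically: here the barycentric weight on $a$ is $\alpha =\frac{2-t}{2}$, so $\log$-convexity gives $\left\vert f^{\prime \prime \prime }\left( \tfrac{t}{2}b+\tfrac{2-t}{2}a\right) \right\vert ^{q}\leq \left\vert f^{\prime \prime \prime }(a)\right\vert ^{q}M^{qt/2}$, and the same exponential evaluation with $K$ replaced by $M$ yields the term $\left\vert f^{\prime \prime \prime }(a)\right\vert \left( \frac{2}{q\ln M}\left[ M^{q/2}-1\right] \right) ^{1/q}$. Adding the two bounds and restoring the factor $\frac{(b-a)^{3}}{96}$ gives the claimed inequality. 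The calculation is routine; the only point demanding care is tracking which weight $\alpha$ is attached to each endpoint, so that the ratios $K$ and $M$ emerge in the directions stated in the hypothesis rather than their reciprocals.
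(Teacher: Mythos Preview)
Your proof is correct and follows essentially the same route as the paper: start from Lemma~\ref{lem 1.1}, bound by the triangle inequality, apply H\"{o}lder's inequality to each integral with the splitting $t^{3}\cdot\left\vert f^{\prime\prime\prime}(\cdots)\right\vert$, then use the $\log$-convexity of $\left\vert f^{\prime\prime\prime}\right\vert^{q}$ and evaluate the resulting exponential integrals. The intermediate computations and the tracking of $K$ versus $M$ match the paper's argument exactly.
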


\begin{proof}
From Lemma \ref{lem 1.1} and using the H\"{o}lder's integral inequality, we
obtain%
\begin{eqnarray*}
&&\left\vert \frac{1}{b-a}\int_{a}^{b}f(x)dx-f\left( \frac{a+b}{2}\right) -%
\frac{\left( b-a\right) ^{2}}{24}f^{\prime \prime }\left( \frac{a+b}{2}%
\right) \right\vert \\
&\leq &\frac{\left( b-a\right) ^{3}}{96}\left\{ \left(
\int_{0}^{1}t^{3p}dt\right) ^{\frac{1}{p}}\left( \int_{0}^{1}\left\vert
f^{\prime \prime \prime }\left( \frac{t}{2}a+\frac{2-t}{2}b\right)
\right\vert ^{q}dt\right) ^{\frac{1}{q}}\right. \\
&&\left. +\left( \int_{0}^{1}t^{3p}dt\right) ^{\frac{1}{p}}\left(
\int_{0}^{1}\left\vert f^{\prime \prime \prime }\left( \frac{t}{2}b+\frac{2-t%
}{2}a\right) \right\vert ^{q}dt\right) ^{\frac{1}{q}}\right\} .
\end{eqnarray*}%
If we use the $\log $-convexity of $\left\vert f^{\prime \prime \prime
}\right\vert ^{q}$ above, we can write%
\begin{eqnarray*}
&&\left\vert \frac{1}{b-a}\int_{a}^{b}f(x)dx-f\left( \frac{a+b}{2}\right) -%
\frac{\left( b-a\right) ^{2}}{24}f^{\prime \prime }\left( \frac{a+b}{2}%
\right) \right\vert \\
&\leq &\frac{\left( b-a\right) ^{3}}{96}\left\{ \left(
\int_{0}^{1}t^{3p}dt\right) ^{\frac{1}{p}}\left( \int_{0}^{1}\left\vert
f^{\prime \prime \prime }(a)\right\vert ^{\frac{qt}{2}}\left\vert f^{\prime
\prime \prime }(b)\right\vert ^{q-\frac{qt}{2}}dt\right) ^{\frac{1}{q}%
}\right. \\
&&+\left. \left( \int_{0}^{1}t^{3p}dt\right) ^{\frac{1}{p}}\left(
\int_{0}^{1}\left\vert f^{\prime \prime \prime }(b)\right\vert ^{\frac{qt}{2}%
}\left\vert f^{\prime \prime \prime }(a)\right\vert ^{q-\frac{qt}{2}%
}dt\right) ^{\frac{1}{q}}\right\} \\
&=&\frac{\left( b-a\right) ^{3}}{96}\left( \frac{1}{3p+1}\right) ^{\frac{1}{p%
}} \\
&&\times \left\{ \left\vert f^{\prime \prime \prime }(b)\right\vert \left( 
\frac{2}{q\ln K}\left[ K^{\frac{q}{2}}-1\right] \right) ^{\frac{1}{q}%
}+\left\vert f^{\prime \prime \prime }(a)\right\vert \left( \frac{2}{q\ln M}%
\left[ M^{\frac{q}{2}}-1\right] \right) ^{\frac{1}{q}}\right\}
\end{eqnarray*}%
where 
\begin{equation*}
\int_{0}^{1}t^{3p}dt=\frac{1}{3p+1},
\end{equation*}%
\begin{equation*}
\int_{0}^{1}\left\vert f^{\prime \prime \prime }(a)\right\vert ^{\frac{qt}{2}%
}\left\vert f^{\prime \prime \prime }(b)\right\vert ^{q-\frac{qt}{2}%
}dt=\left\vert f^{\prime \prime \prime }(b)\right\vert ^{q}\left( \frac{2}{%
q\ln K}\left[ K^{\frac{q}{2}}-1\right] \right)
\end{equation*}%
and%
\begin{equation*}
\int_{0}^{1}\left\vert f^{\prime \prime \prime }(b)\right\vert ^{\frac{qt}{2}%
}\left\vert f^{\prime \prime \prime }(a)\right\vert ^{q-\frac{qt}{2}%
}dt=\left\vert f^{\prime \prime \prime }(a)\right\vert ^{q}\left( \frac{2}{%
q\ln M}\left[ M^{\frac{q}{2}}-1\right] \right) .
\end{equation*}%
The proof is completed.
\end{proof}

\begin{corollary}
\label{co 1.2.} Let $K$ and $M$ be defined as in Theorem \ref{teo 2.2}. If
we choose $f^{\prime \prime }\left( \frac{a+b}{2}\right) =0$ in Theorem \ref%
{teo 2.2}, we obtain the following inequality%
\begin{eqnarray*}
&&\left\vert \frac{1}{b-a}\int_{a}^{b}f(x)dx-f\left( \frac{a+b}{2}\right)
\right\vert \\
&\leq &\frac{\left( b-a\right) ^{3}}{96}\left( \frac{1}{3p+1}\right) ^{\frac{%
1}{p}}\left\{ \left\vert f^{\prime \prime \prime }(b)\right\vert \left( 
\frac{2}{q\ln K}\left[ K^{\frac{q}{2}}-1\right] \right) ^{\frac{1}{q}%
}+\left\vert f^{\prime \prime \prime }(a)\right\vert \left( \frac{2}{q\ln M}%
\left[ M^{\frac{q}{2}}-1\right] \right) ^{\frac{1}{q}}\right\}
\end{eqnarray*}%
where $q>1,$ $\frac{1}{p}+\frac{1}{q}=1.$
\end{corollary}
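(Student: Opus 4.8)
The plan is to derive this statement as a direct specialization of Theorem \ref{teo 2.2} rather than through any fresh estimation. First I would invoke Theorem \ref{teo 2.2} verbatim for the same mapping $f$, which is assumed three times differentiable on $I^{\circ}$ with $f^{\prime\prime\prime}\in L[a,b]$ and $\left\vert f^{\prime\prime\prime}\right\vert^{q}$ log-convex on $[a,b]$ for the fixed $q>1$ satisfying $\frac{1}{p}+\frac{1}{q}=1$; these are precisely the hypotheses carried over into the corollary, so no verification of applicability is needed.

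The key observation is that the left-hand side of the inequality in Theorem \ref{teo 2.2} contains the correction term $\frac{(b-a)^{2}}{24}f^{\prime\prime}\left(\frac{a+b}{2}\right)$, and the additional assumption $f^{\prime\prime}\left(\frac{a+b}{2}\right)=0$ forces this term to vanish. Consequently the absolute value on the left collapses to $\left\vert \frac{1}{b-a}\int_{a}^{b}f(x)\,dx-f\left(\frac{a+b}{2}\right)\right\vert$, which is exactly the quantity appearing in the corollary.

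Next I would note that the right-hand side of Theorem \ref{teo 2.2}, namely the factor $\frac{(b-a)^{3}}{96}\left(\frac{1}{3p+1}\right)^{1/p}$ multiplied by the bracketed sum built from $K$, $M$, and the endpoint values $\left\vert f^{\prime\prime\prime}(a)\right\vert$ and $\left\vert f^{\prime\prime\prime}(b)\right\vert$, does not involve $f^{\prime\prime}\left(\frac{a+b}{2}\right)$ in any way, so it is transcribed unchanged. Substituting these two facts into the conclusion of Theorem \ref{teo 2.2} delivers the claimed bound at once.

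Because this is a pure specialization, I anticipate no genuine obstacle: there is no new integral to evaluate and no sharper inequality to establish. The only point deserving a moment of care is the bookkeeping, namely confirming that imposing $f^{\prime\prime}\left(\frac{a+b}{2}\right)=0$ alters only the left-hand side while leaving every constant on the right intact, after which the result follows immediately.
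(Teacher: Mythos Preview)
Your proposal is correct and mirrors exactly what the paper does: the corollary is stated without a separate proof because it is obtained from Theorem~\ref{teo 2.2} simply by imposing $f^{\prime\prime}\!\left(\frac{a+b}{2}\right)=0$, which removes the correction term on the left while leaving the right-hand side untouched. There is nothing to add.
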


\begin{theorem}
\label{teo 2.3} Let $f:I\rightarrow \lbrack 0,\infty ),$ be a three times
differentiable mapping on $I^{\circ \text{ }}$ such that $f^{\prime \prime
\prime }\in L[a,b]$ where $a,b$ $\in I^{\circ }$ with $a<b.$ If $\left\vert
f^{\prime \prime \prime }\right\vert ^{q}$ is $\log -$convex on $[a,b]$ ,
then the following inequality holds for some fixed $q\geq 1,$ then the
following inequality holds:%
\begin{eqnarray*}
&&\left\vert \frac{1}{b-a}\int_{a}^{b}f(x)dx-f\left( \frac{a+b}{2}\right) -%
\frac{\left( b-a\right) ^{2}}{24}f^{\prime \prime }\left( \frac{a+b}{2}%
\right) \right\vert \\
&\leq &\frac{\left( b-a\right) ^{3}}{96}\left( \frac{1}{4}\right) ^{1-\frac{1%
}{q}}\left\{ \left\vert f^{\prime \prime \prime }(b)\right\vert \left( \mu
_{K,q}\right) ^{\frac{1}{q}}+\left\vert f^{\prime \prime \prime
}(a)\right\vert \left( \mu _{M,q}\right) ^{\frac{1}{q}}\right\}
\end{eqnarray*}%
\begin{equation*}
\mu _{K,q}=\frac{2K^{\frac{q}{2}}\left( q\ln K-6\right) }{\left( q\ln
K\right) ^{2}}+\frac{48K^{\frac{q}{2}}\left( q\ln K-2\right) }{\left( q\ln
K\right) ^{4}}+\frac{96}{\left( q\ln K\right) ^{4}},
\end{equation*}%
\begin{equation*}
\mu _{M,q}=\frac{2M^{\frac{q}{2}}\left( q\ln M-6\right) }{\left( q\ln
M\right) ^{2}}+\frac{48M^{\frac{q}{2}}\left( q\ln M-2\right) }{\left( q\ln
M\right) ^{4}}+\frac{96}{\left( q\ln M\right) ^{4}}
\end{equation*}%
and%
\begin{equation*}
K=\frac{\left\vert f^{\prime \prime \prime }(a)\right\vert }{\left\vert
f^{\prime \prime \prime }(b)\right\vert },\text{ }M=\frac{\left\vert
f^{\prime \prime \prime }(b)\right\vert }{\left\vert f^{\prime \prime \prime
}(a)\right\vert }.
\end{equation*}
\end{theorem}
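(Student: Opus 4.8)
The plan is to run the same argument as in Theorem~\ref{teo 2.2}, with the power-mean integral inequality taking the place of Hölder's inequality. First I would apply Lemma~\ref{lem 1.1}, pass to absolute values, and use the triangle inequality to reach
\begin{eqnarray*}
&&\left\vert \frac{1}{b-a}\int_{a}^{b}f(x)dx-f\left( \frac{a+b}{2}\right) -\frac{\left( b-a\right)^{2}}{24}f^{\prime \prime }\left( \frac{a+b}{2}\right) \right\vert \\
&\leq &\frac{\left( b-a\right)^{3}}{96}\left\{ \int_{0}^{1}t^{3}\left\vert f^{\prime \prime \prime }\left( \frac{t}{2}a+\frac{2-t}{2}b\right) \right\vert dt+\int_{0}^{1}t^{3}\left\vert f^{\prime \prime \prime }\left( \frac{t}{2}b+\frac{2-t}{2}a\right) \right\vert dt\right\} ,
\end{eqnarray*}
which is the same starting bound used in the two previous theorems.

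To each integral I would then apply the power-mean inequality $\int_{0}^{1}t^{3}\left\vert g(t)\right\vert dt\leq \left( \int_{0}^{1}t^{3}dt\right)^{1-\frac{1}{q}}\left( \int_{0}^{1}t^{3}\left\vert g(t)\right\vert^{q}dt\right)^{\frac{1}{q}}$, valid for $q\geq 1$. Since $\int_{0}^{1}t^{3}dt=\frac{1}{4}$, this is exactly what produces the factor $\left( \frac{1}{4}\right)^{1-\frac{1}{q}}$ in the statement. Next I would insert the $\log$-convexity of $\left\vert f^{\prime \prime \prime }\right\vert^{q}$: writing $\frac{t}{2}a+\frac{2-t}{2}b=\frac{t}{2}a+\left( 1-\frac{t}{2}\right) b$ gives $\left\vert f^{\prime \prime \prime }\left( \frac{t}{2}a+\frac{2-t}{2}b\right) \right\vert^{q}\leq \left\vert f^{\prime \prime \prime }(a)\right\vert^{\frac{qt}{2}}\left\vert f^{\prime \prime \prime }(b)\right\vert^{q-\frac{qt}{2}}$, and symmetrically for the second point. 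Pulling the constants out, the two inner $q$-th power integrals become $\left\vert f^{\prime \prime \prime }(b)\right\vert^{q}\int_{0}^{1}t^{3}K^{\frac{qt}{2}}dt$ and $\left\vert f^{\prime \prime \prime }(a)\right\vert^{q}\int_{0}^{1}t^{3}M^{\frac{qt}{2}}dt$.

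The only genuine computation is the evaluation of $\int_{0}^{1}t^{3}K^{\frac{qt}{2}}dt$. I would set $K^{\frac{qt}{2}}=e^{ct}$ with $c=\frac{q\ln K}{2}$ and integrate by parts three times to get $\int_{0}^{1}t^{3}e^{ct}dt=\frac{e^{c}}{c}-\frac{3e^{c}}{c^{2}}+\frac{6e^{c}}{c^{3}}-\frac{6\left( e^{c}-1\right) }{c^{4}}$. Substituting $e^{c}=K^{\frac{q}{2}}$ back and clearing the powers of $\frac{1}{2}$ coming from $c=\frac{q\ln K}{2}$ turns this into the three-term expression $\mu_{K,q}$, and the identical calculation with $M$ gives $\mu_{M,q}$. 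Raising to the power $\frac{1}{q}$ and summing then yields the claimed inequality.

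The main obstacle is purely the bookkeeping in the last step: one has to verify that $\frac{K^{q/2}}{c}-\frac{3K^{q/2}}{c^{2}}+\frac{6K^{q/2}}{c^{3}}-\frac{6(K^{q/2}-1)}{c^{4}}$ with $c=\frac{q\ln K}{2}$ regroups exactly as $\frac{2K^{q/2}(q\ln K-6)}{(q\ln K)^{2}}+\frac{48K^{q/2}(q\ln K-2)}{(q\ln K)^{4}}+\frac{96}{(q\ln K)^{4}}$. There is no further analytic difficulty, since the power-mean and $\log$-convexity steps are completely parallel to Theorems~\ref{teo 2.1} and~\ref{teo 2.2}.
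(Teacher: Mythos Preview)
Your proposal is correct and follows exactly the paper's approach: the paper applies Lemma~\ref{lem 1.1}, the power-mean inequality with weight $t^{3}$ (giving the factor $(1/4)^{1-1/q}$), then the $\log$-convexity of $|f'''|^{q}$, and finishes by saying ``the proof is completed by making use of the necessary computation.'' You actually supply more detail than the paper does, since you carry out the integration by parts for $\int_{0}^{1}t^{3}e^{ct}\,dt$ and indicate how it regroups into $\mu_{K,q}$, whereas the paper omits that calculation entirely.
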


\begin{proof}
From Lemma \ref{lem 1.1}, using the well-known power-mean integral
inequality and $\log -$convexity of $\left\vert f^{\prime \prime \prime
}\right\vert ^{q}$ we have%
\begin{eqnarray*}
&&\left\vert \frac{1}{b-a}\int_{a}^{b}f(x)dx-f\left( \frac{a+b}{2}\right) -%
\frac{\left( b-a\right) ^{2}}{24}f^{\prime \prime }\left( \frac{a+b}{2}%
\right) \right\vert \\
&\leq &\frac{\left( b-a\right) ^{3}}{96}\left\{ \left(
\int_{0}^{1}t^{3}dt\right) ^{1-\frac{1}{q}}\left(
\int_{0}^{1}t^{3}\left\vert f^{\prime \prime \prime }\left( \frac{t}{2}a+%
\frac{2-t}{2}b\right) \right\vert ^{q}dt\right) ^{\frac{1}{q}}\right. \\
&&\left. +\left( \int_{0}^{1}t^{3}dt\right) ^{1-\frac{1}{q}}\left(
\int_{0}^{1}t^{3}\left\vert f^{\prime \prime \prime }\left( \frac{t}{2}b+%
\frac{2-t}{2}a\right) \right\vert ^{q}dt\right) ^{\frac{1}{q}}\right\} \\
&\leq &\frac{\left( b-a\right) ^{3}}{96}\left\{ \left(
\int_{0}^{1}t^{3}dt\right) ^{1-\frac{1}{q}}\left(
\int_{0}^{1}t^{3}\left\vert f^{\prime \prime \prime }(a)\right\vert ^{\frac{%
qt}{2}}\left\vert f^{\prime \prime \prime }(b)\right\vert ^{q-\frac{qt}{2}%
}dt\right) ^{\frac{1}{q}}\right. \\
&&\left. +\left( \int_{0}^{1}t^{3}dt\right) ^{1-\frac{1}{q}}\left(
\int_{0}^{1}t^{3}\left\vert f^{\prime \prime \prime }(b)\right\vert ^{\frac{%
qt}{2}}\left\vert f^{\prime \prime \prime }(a)\right\vert ^{q-\frac{qt}{2}%
}dt\right) ^{\frac{1}{q}}\right\} .
\end{eqnarray*}%
The proof is completed by making use of the neccessary computation.
\end{proof}

\begin{corollary}
\label{co 1.3.} Let $\mu _{K,q},\mu _{M,q},K$ and $\ M$ be defined as in
Theorem \ref{teo 2.3}. If we choose $f^{\prime \prime }\left( \frac{a+b}{2}%
\right) =0$ in Theorem \ref{teo 2.3}, we obtain the following inequality%
\begin{eqnarray*}
&&\left\vert \frac{1}{b-a}\int_{a}^{b}f(x)dx-f\left( \frac{a+b}{2}\right)
\right\vert \\
&\leq &\frac{\left( b-a\right) ^{3}}{96}\left( \frac{1}{4}\right) ^{1-\frac{1%
}{q}}\left\{ \left\vert f^{\prime \prime \prime }(b)\right\vert \left( \mu
_{K,q}\right) ^{\frac{1}{q}}+\left\vert f^{\prime \prime \prime
}(a)\right\vert \left( \mu _{M,q}\right) ^{\frac{1}{q}}\right\} .
\end{eqnarray*}
\end{corollary}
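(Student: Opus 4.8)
The plan is to obtain this corollary as an immediate specialization of Theorem \ref{teo 2.3}, requiring no independent argument. The hypotheses imposed here --- that $f:I\rightarrow \lbrack 0,\infty )$ is three times differentiable on $I^{\circ }$ with $f^{\prime \prime \prime }\in L[a,b]$ and that $\left\vert f^{\prime \prime \prime }\right\vert ^{q}$ is $\log $-convex on $[a,b]$ for some fixed $q\geq 1$ --- are exactly the standing hypotheses of Theorem \ref{teo 2.3}. I would therefore begin by applying that theorem verbatim, which supplies the bound
\begin{equation*}
\left\vert \frac{1}{b-a}\int_{a}^{b}f(x)dx-f\left( \frac{a+b}{2}\right) -\frac{\left( b-a\right) ^{2}}{24}f^{\prime \prime }\left( \frac{a+b}{2}\right) \right\vert \leq \frac{\left( b-a\right) ^{3}}{96}\left( \frac{1}{4}\right) ^{1-\frac{1}{q}}\left\{ \left\vert f^{\prime \prime \prime }(b)\right\vert \left( \mu _{K,q}\right) ^{\frac{1}{q}}+\left\vert f^{\prime \prime \prime }(a)\right\vert \left( \mu _{M,q}\right) ^{\frac{1}{q}}\right\},
\end{equation*}
with $\mu _{K,q},\mu _{M,q},K$ and $M$ exactly as defined in that theorem.

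The second and final step is to impose the extra condition $f^{\prime \prime }\left( \frac{a+b}{2}\right) =0$. Under this assumption the additive term $\frac{\left( b-a\right) ^{2}}{24}f^{\prime \prime }\left( \frac{a+b}{2}\right) $ appearing inside the modulus on the left-hand side vanishes identically, so the left side collapses to $\left\vert \frac{1}{b-a}\int_{a}^{b}f(x)dx-f\left( \frac{a+b}{2}\right) \right\vert $. The right-hand side is built only from $b-a$, $q$, and the boundary values $\left\vert f^{\prime \prime \prime }(a)\right\vert ,\left\vert f^{\prime \prime \prime }(b)\right\vert $ through the quantities $\mu _{K,q},\mu _{M,q},K,M$; since none of these makes reference to $f^{\prime \prime }\left( \frac{a+b}{2}\right) $, the bound is left untouched by the specialization. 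Substituting the two observations into the displayed inequality then produces precisely the asserted estimate.

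I do not anticipate any genuine obstacle, as the corollary is simply the $f^{\prime \prime }\left( \frac{a+b}{2}\right) =0$ instance of Theorem \ref{teo 2.3}: a single additive term on the left is annihilated while the controlling right-hand side remains intact. The only matter to confirm is the trivial observation that the quantity set to zero coincides with the term dropped from the estimate, which is transparent from the form of the inequality.
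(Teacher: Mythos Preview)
Your proposal is correct and matches the paper's approach exactly: the corollary is stated there without a separate proof, as it is simply the specialization of Theorem \ref{teo 2.3} obtained by setting $f^{\prime \prime }\left( \frac{a+b}{2}\right) =0$, which removes the corresponding term from the left-hand side while leaving the right-hand side unchanged.
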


\begin{corollary}
\label{co 1.4.} From Corollaries \ref{co 1.1.}-\ref{co 1.3.}, we have%
\begin{equation*}
\left\vert \frac{1}{b-a}\int_{a}^{b}f(x)dx-f\left( \frac{a+b}{2}\right)
\right\vert \leq \min \left\{ \chi _{1},\chi _{2},\chi _{3}\right\}
\end{equation*}%
where%
\begin{eqnarray*}
\chi _{1} &=&\frac{\left( b-a\right) ^{3}}{96}\left\{ \left\vert f^{\prime
\prime \prime }(b)\right\vert \frac{2K^{\frac{1}{2}}\left( \ln K-6\right) }{%
\left( \ln K\right) ^{2}}+\frac{48K^{\frac{1}{2}}\left( \ln K-2\right) }{%
\left( \ln K\right) ^{4}}+\frac{96}{\left( \ln K\right) ^{4}}\right. \\
&&\left. +\left\vert f^{\prime \prime \prime }(a)\right\vert \frac{2M^{\frac{%
1}{2}}\left( \ln M-6\right) }{\left( \ln M\right) ^{2}}+\frac{48M^{\frac{1}{2%
}}\left( \ln M-2\right) }{\left( \ln M\right) ^{4}}+\frac{96}{\left( \ln
M\right) ^{4}}\right\} ,
\end{eqnarray*}%
\begin{eqnarray*}
\chi _{2} &=&\frac{\left( b-a\right) ^{3}}{96}\left( \frac{1}{3p+1}\right) ^{%
\frac{1}{p}} \\
&&\times \left\{ \left\vert f^{\prime \prime \prime }(b)\right\vert \left( 
\frac{2}{q\ln K}\left[ K^{\frac{q}{2}}-1\right] \right) ^{\frac{1}{q}%
}+\left\vert f^{\prime \prime \prime }(a)\right\vert \left( \frac{2}{q\ln M}%
\left[ M^{\frac{q}{2}}-1\right] \right) ^{\frac{1}{q}}\right\} ,
\end{eqnarray*}%
\begin{eqnarray*}
\chi _{3} &=&\frac{\left( b-a\right) ^{3}}{96}\left( \frac{1}{4}\right) ^{1-%
\frac{1}{q}}\left\{ \left\vert f^{\prime \prime \prime }(b)\right\vert
\left( \frac{2K^{\frac{q}{2}}\left( q\ln K-6\right) }{\left( q\ln K\right)
^{2}}+\frac{48K^{\frac{q}{2}}\left( q\ln K-2\right) }{\left( q\ln K\right)
^{4}}+\frac{96}{\left( q\ln K\right) ^{4}}\right) ^{\frac{1}{q}}\right. \\
&&\left. +\left\vert f^{\prime \prime \prime }(a)\right\vert \left( \frac{%
2M^{\frac{1}{2}}\left( \ln M-6\right) }{\left( \ln M\right) ^{2}}+\frac{48M^{%
\frac{1}{2}}\left( \ln M-2\right) }{\left( \ln M\right) ^{4}}+\frac{96}{%
\left( \ln M\right) ^{4}}\right) ^{\frac{1}{q}}\right\}
\end{eqnarray*}%
and $K=\frac{\left\vert f^{\prime \prime \prime }(a)\right\vert }{\left\vert
f^{\prime \prime \prime }(b)\right\vert },$ $M=\frac{\left\vert f^{\prime
\prime \prime }(b)\right\vert }{\left\vert f^{\prime \prime \prime
}(a)\right\vert }.$
\end{corollary}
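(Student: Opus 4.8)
The plan is to recognize that this statement is a direct synthesis of the three preceding corollaries, all of which estimate one and the same quantity. First I would observe that, under the standing hypotheses, each of Corollaries \ref{co 1.1.}, \ref{co 1.2.} and \ref{co 1.3.} furnishes an upper bound for the identical expression $\left\vert \frac{1}{b-a}\int_{a}^{b}f(x)\,dx-f\left( \frac{a+b}{2}\right) \right\vert$, namely the quantities denoted here by $\chi _{1}$, $\chi _{2}$ and $\chi _{3}$ respectively. These are nothing but the right-hand sides of the three corollaries transcribed verbatim, so no new computation is required to produce them.

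Before combining the bounds I would check that the three hypotheses are mutually compatible. Corollary \ref{co 1.1.} requires $\left\vert f^{\prime \prime \prime }\right\vert $ to be $\log $-convex, whereas Corollaries \ref{co 1.2.} and \ref{co 1.3.} require $\left\vert f^{\prime \prime \prime }\right\vert ^{q}$ to be $\log $-convex. Since for any fixed $q>0$ the convexity of $q\log \left\vert f^{\prime \prime \prime }\right\vert $ is equivalent to the convexity of $\log \left\vert f^{\prime \prime \prime }\right\vert $, these hypotheses in fact coincide. Hence all three corollaries are simultaneously applicable once $\left\vert f^{\prime \prime \prime }\right\vert $ is assumed $\log $-convex, and the three inequalities hold for the same $f$ on the same interval.

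The final step is the elementary observation that if a nonnegative real number is bounded above by each of finitely many quantities, then it is bounded above by their minimum. Applying this with the three bounds $\chi _{1},\chi _{2},\chi _{3}$ yields
\[
\left\vert \frac{1}{b-a}\int_{a}^{b}f(x)\,dx-f\left( \frac{a+b}{2}\right) \right\vert \leq \min \left\{ \chi _{1},\chi _{2},\chi _{3}\right\} ,
\]
which is precisely the assertion. I do not expect any genuine obstacle here: the mathematical content lies entirely in the three corollaries already established, and the present corollary merely packages them into the sharpest of the three estimates. The only point demanding a moment's care is the bookkeeping needed to confirm that $\chi _{1},\chi _{2},\chi _{3}$ are copied correctly from the right-hand sides of Corollaries \ref{co 1.1.}, \ref{co 1.2.} and \ref{co 1.3.}.
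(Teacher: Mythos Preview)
Your proposal is correct and matches the paper's approach: the paper offers no separate proof of this corollary, treating it as an immediate consequence of Corollaries~\ref{co 1.1.}--\ref{co 1.3.}, which is exactly what you do. Your remark that $\log$-convexity of $\left\vert f^{\prime\prime\prime}\right\vert$ and of $\left\vert f^{\prime\prime\prime}\right\vert^{q}$ are equivalent is a welcome clarification that the paper leaves implicit; note also that the hypothesis $f^{\prime\prime}\!\left(\frac{a+b}{2}\right)=0$ from those corollaries is tacitly in force here as well.
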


\begin{remark}
\label{rem 1.1} In Theorem \ref{teo 2.3} and Corollary \ref{co 1.3.}, if we
choose $q=1,$ we obtain Theorem \ref{teo 2.1} and Corollary \ref{co 1.1.}
respectively.
\end{remark}

\section{APPLICATIONS TO MIDPOINT FORMULA}

We give some error estimates to midpoint formula by using the results of
Section 2.

Let $d$ be a division $a=x_{0}<x_{1}<...<x_{n-1}<x_{n}=b$ of the interval $%
[a,b]$ and consider the formula%
\begin{equation*}
\int_{a}^{b}f(x)dx=M(f,d)+E(f,d)
\end{equation*}%
where $M(f,d)=\sum_{i=0}^{n-1}f\left( \frac{x_{i}+x_{i+1}}{2}\right) \left(
x_{i+1}-x_{i}\right) $ for the midpoint version and $E(f,d)$ denotes the
associated approximation error.

\begin{proposition}
\label{prop 3.1} Let $f:I\rightarrow \lbrack 0,\infty )$ be a three times
differentiable mapping on $I^{\circ }$ with $a,b\in I^{\circ }$ such that $%
a<b.$ If $\left\vert f^{\prime \prime \prime }\right\vert $ is  $\log -$%
convex function with $f^{\prime \prime \prime }\in L_{1}([a,b]),$ then for
every division $d$ of $[a,b],$ the midpoint error estimate satisfies%
\begin{equation*}
\left\vert E(f,d)\leq \right\vert \sum_{i=0}^{n-1}\frac{\left(
x_{i+1}-x_{i}\right) ^{4}}{96}\left\{ \left\vert f^{\prime \prime \prime
}(x_{i+1})\right\vert \mu _{1}+\left\vert f^{\prime \prime \prime
}(x_{i})\right\vert \mu _{2}\right\} 
\end{equation*}%
where 
\begin{equation*}
\mu _{1}=\frac{2K_{1}^{\frac{1}{2}}\left( \ln K_{1}-6\right) }{\left( \ln
K_{1}\right) ^{2}}+\frac{48K_{1}^{\frac{1}{2}}\left( \ln K_{1}-2\right) }{%
\left( \ln K_{1}\right) ^{4}}+\frac{96}{\left( \ln K_{1}\right) ^{4}},
\end{equation*}%
\begin{equation*}
\mu _{2}=\frac{2M_{1}^{\frac{1}{2}}\left( \ln M_{1}-6\right) }{\left( \ln
M_{1}\right) ^{2}}+\frac{48M_{1}^{\frac{1}{2}}\left( \ln M_{1}-2\right) }{%
\left( \ln M_{1}\right) ^{4}}+\frac{96}{\left( \ln M_{1}\right) ^{4}}
\end{equation*}%
and%
\begin{equation*}
K_{1}=\frac{\left\vert f^{\prime \prime \prime }(x_{i})\right\vert }{%
\left\vert f^{\prime \prime \prime }(x_{i+1})\right\vert },\text{ }M_{1}=%
\frac{\left\vert f^{\prime \prime \prime }(x_{i+1})\right\vert }{\left\vert
f^{\prime \prime \prime }(x_{i})\right\vert }.
\end{equation*}
\end{proposition}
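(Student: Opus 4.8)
The plan is to localize Corollary~\ref{co 1.1.} to each cell of the division and then sum. Fix an index $i$ and write $h_i:=x_{i+1}-x_i$. Since $\left\vert f^{\prime \prime \prime }\right\vert $ is $\log$-convex on $[a,b]$, it is $\log$-convex on every subinterval $[x_i,x_{i+1}]\subset [a,b]$, so Corollary~\ref{co 1.1.} applies on $[x_i,x_{i+1}]$ in place of $[a,b]$, with endpoint data $a\mapsto x_i$, $b\mapsto x_{i+1}$. This gives
\[
\left\vert \frac{1}{h_i}\int_{x_i}^{x_{i+1}}f(x)\,dx-f\!\left(\frac{x_i+x_{i+1}}{2}\right)\right\vert \le \frac{h_i^{3}}{96}\bigl\{\left\vert f^{\prime \prime \prime }(x_{i+1})\right\vert \mu_1+\left\vert f^{\prime \prime \prime }(x_i)\right\vert \mu_2\bigr\},
\]
where $\mu_1,\mu_2$ are exactly $\mu_K,\mu_M$ of Theorem~\ref{teo 2.1} evaluated at the local ratios $K_1=\left\vert f^{\prime \prime \prime }(x_i)\right\vert /\left\vert f^{\prime \prime \prime }(x_{i+1})\right\vert $ and $M_1=\left\vert f^{\prime \prime \prime }(x_{i+1})\right\vert /\left\vert f^{\prime \prime \prime }(x_i)\right\vert $.

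Next I would clear the factor $1/h_i$ by multiplying through by $h_i=x_{i+1}-x_i$, which turns the left side into $\left\vert \int_{x_i}^{x_{i+1}}f\,dx-h_i\,f\!\left(\frac{x_i+x_{i+1}}{2}\right)\right\vert $ and promotes the power of $h_i$ on the right from $3$ to $4$, producing the factor $(x_{i+1}-x_i)^{4}/96$ of the statement. Setting $A_i:=\int_{x_i}^{x_{i+1}}f\,dx-h_i\,f\!\left(\frac{x_i+x_{i+1}}{2}\right)$ and using additivity of the integral together with $M(f,d)=\sum_i h_i\,f\!\left(\frac{x_i+x_{i+1}}{2}\right)$, one has $E(f,d)=\int_a^b f\,dx-M(f,d)=\sum_{i=0}^{n-1}A_i$. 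Summing the $n$ localized inequalities and applying the triangle inequality $\left\vert \sum_i A_i\right\vert \le \sum_i \left\vert A_i\right\vert $ yields
\[
\left\vert E(f,d)\right\vert \le \sum_{i=0}^{n-1}\frac{(x_{i+1}-x_i)^{4}}{96}\bigl\{\left\vert f^{\prime \prime \prime }(x_{i+1})\right\vert \mu_1+\left\vert f^{\prime \prime \prime }(x_i)\right\vert \mu_2\bigr\},
\]
which is the asserted estimate.

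The only point I would flag is the absence of the second-derivative term $\frac{(b-a)^{2}}{24}f^{\prime \prime }\!\left(\frac{a+b}{2}\right)$ that appears in Theorem~\ref{teo 2.1}: the clean midpoint bound requires invoking Corollary~\ref{co 1.1.} (the normalization $f^{\prime \prime }=0$ at each cell midpoint) rather than Theorem~\ref{teo 2.1} directly, and this is where care is needed so that no stray $f^{\prime \prime }$ contributions survive the summation. Beyond that, the argument involves no hard analysis: the $\log$-convexity hypothesis descends to subintervals automatically, and the triangle inequality does all of the assembly. I would keep $K_1$ and $M_1$ explicit so that $\mu_1,\mu_2$ read off verbatim from the $\mu_K,\mu_M$ formulas.
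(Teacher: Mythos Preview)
Your proposal is correct and follows exactly the paper's approach: apply Corollary~\ref{co 1.1.} on each subinterval $[x_i,x_{i+1}]$, multiply through by $x_{i+1}-x_i$, and sum using the triangle inequality. Your caveat about needing $f^{\prime\prime}\!\left(\frac{x_i+x_{i+1}}{2}\right)=0$ at each midpoint is well taken; the paper's proof invokes Corollary~\ref{co 1.1.} without explicitly restating this hypothesis, so the same tacit assumption is present there.
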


\begin{proof}
By appliying Corollary \ref{co 1.1.} on the subintervals $[x_{i},x_{i+1}],$ $%
(i=0,1,...,n-1)$ of the division $d$, we have%
\begin{eqnarray*}
&&\left\vert \frac{1}{x_{i+1}-x_{i}}\int_{x_{i}}^{x_{i+1}}f(x)dx-f\left( 
\frac{x_{i}+x_{i+1}}{2}\right) \right\vert  \\
&\leq &\frac{\left( x_{i+1}-x_{i}\right) ^{3}}{96}\left\{ \left\vert
f^{\prime \prime \prime }(x_{i+1})\right\vert \mu _{1}+\left\vert f^{\prime
\prime \prime }(x_{i})\right\vert \mu _{2}\right\} .
\end{eqnarray*}%
By summing over $i$ from $0$ to $n-1,$ we can write%
\begin{equation*}
\left\vert \int_{a}^{b}f(x)dx-M(f,d)\right\vert \leq \sum_{i=0}^{n-1}\frac{%
\left( x_{i+1}-x_{i}\right) ^{4}}{96}\left\{ \left\vert f^{\prime \prime
\prime }(x_{i+1})\right\vert \mu _{1}+\left\vert f^{\prime \prime \prime
}(x_{i})\right\vert \mu _{2}\right\} 
\end{equation*}%
which completes the proof.
\end{proof}

\begin{proposition}
\label{prop 3.2} Let $f:I\rightarrow \lbrack 0,\infty )$ be a three times
differentiable mapping on $I^{\circ }$ with $a,b\in I^{\circ }$ such that $%
a<b.$ If $\left\vert f^{\prime \prime \prime }\right\vert ^{q}$ is $\log -$%
convex function with $f^{\prime \prime \prime }\in L_{1}([a,b])$ for some
fixed $q>1,$ then for every division $d$ of $[a,b],$ the midpoint error
estimate satisfies%
\begin{eqnarray*}
&&\left\vert E(f,d)\leq \right\vert \left( \frac{1}{3p+1}\right) ^{\frac{1}{p%
}}\frac{1}{96}\sum_{i=0}^{n-1}\left( x_{i+1}-x_{i}\right) ^{4}\left\{
\left\vert f^{\prime \prime \prime }(x_{i+1})\right\vert \left( \frac{2}{%
q\ln K_{1}}\left[ K_{1}^{\frac{q}{2}}-1\right] \right) ^{\frac{1}{q}}\right. 
\\
&&\left. +\left\vert f^{\prime \prime \prime }(x_{i})\right\vert \left( 
\frac{2}{q\ln M_{1}}\left[ M_{1}^{\frac{q}{2}}-1\right] \right) ^{\frac{1}{q}%
}\right\} 
\end{eqnarray*}%
where $\frac{1}{p}+\frac{1}{q}=1$ and $K_{1},M_{1}$ are as defined in
Proposition \ref{prop 3.1}.
\end{proposition}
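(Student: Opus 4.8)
The plan is to mirror the proof of Proposition~\ref{prop 3.1} verbatim, substituting Corollary~\ref{co 1.2.} for Corollary~\ref{co 1.1.}. The hypotheses of Proposition~\ref{prop 3.2} --- that $\left\vert f^{\prime\prime\prime}\right\vert^{q}$ is $\log$-convex on $[a,b]$ for a fixed $q>1$ with $f^{\prime\prime\prime}\in L_{1}([a,b])$ --- are exactly those needed to invoke Corollary~\ref{co 1.2.} on an arbitrary subinterval, since $\log$-convexity is inherited under restriction to any subinterval of $[a,b]$.

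First I would apply Corollary~\ref{co 1.2.} on each subinterval $[x_{i},x_{i+1}]$ of the division $d$, taking $a=x_{i}$ and $b=x_{i+1}$ and reading off $K_{1}$ and $M_{1}$ from the local endpoint values as in Proposition~\ref{prop 3.1}. This produces a bound on $\left\vert \frac{1}{x_{i+1}-x_{i}}\int_{x_{i}}^{x_{i+1}}f(x)\,dx-f\left(\frac{x_{i}+x_{i+1}}{2}\right)\right\vert$ of the form $\frac{(x_{i+1}-x_{i})^{3}}{96}\left(\frac{1}{3p+1}\right)^{1/p}\{\cdots\}$. Multiplying through by $(x_{i+1}-x_{i})$ turns the averaged integral into $\int_{x_{i}}^{x_{i+1}}f(x)\,dx$ and the midpoint value into its contribution $f\left(\frac{x_{i}+x_{i+1}}{2}\right)(x_{i+1}-x_{i})$ to $M(f,d)$, while raising the power of the length from three to four on the right.

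Summing over $i=0,\dots,n-1$ and applying the triangle inequality to move the absolute value inside the sum, I would use $\sum_{i=0}^{n-1}\int_{x_{i}}^{x_{i+1}}f(x)\,dx=\int_{a}^{b}f(x)\,dx$ together with the definition of $M(f,d)$, so that the left-hand side collapses to $\left\vert\int_{a}^{b}f(x)\,dx-M(f,d)\right\vert=\left\vert E(f,d)\right\vert$. The factor $\left(\frac{1}{3p+1}\right)^{1/p}\frac{1}{96}$ is independent of $i$ and hence pulls out of the sum, yielding exactly the stated inequality.

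I expect no genuine obstacle here: the argument is pure bookkeeping once Corollary~\ref{co 1.2.} is available, and it is the exact analogue of the proof of Proposition~\ref{prop 3.1}. The only point meriting attention is that $K_{1}$ and $M_{1}$ are recomputed on each subinterval from $\left\vert f^{\prime\prime\prime}(x_{i})\right\vert$ and $\left\vert f^{\prime\prime\prime}(x_{i+1})\right\vert$, so the summands genuinely depend on $i$ and the sum cannot be collapsed further.
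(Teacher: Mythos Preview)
Your proposal is correct and follows exactly the approach indicated in the paper, which simply states that the proof is obtained by applying Corollary~\ref{co 1.2.} on each subinterval in the same manner as Proposition~\ref{prop 3.1}. Your write-up is in fact more detailed than the paper's one-line proof.
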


\begin{proof}
The proof can be maintained by using Corollary \ref{co 1.2.} like
Proposition \ref{prop 3.1}.
\end{proof}

\begin{proposition}
\label{prop 3.3} Let $f:I\rightarrow \lbrack 0,\infty )$ be a three times
differentiable mapping on $I^{\circ }$ with $a,b\in I^{\circ }$ such that $%
a<b.$ If $\left\vert f^{\prime \prime \prime }\right\vert ^{q}$ is $\log -$%
convex function with $f^{\prime \prime \prime }\in L_{1}([a,b])$ for some
fixed $q\geq 1,$ then for every division $d$ of $[a,b],$ the midpoint error
estimate satisfies%
\begin{equation*}
\left\vert E(f,d)\leq \right\vert \frac{1}{96}\left( \frac{1}{4}\right) ^{1-%
\frac{1}{q}}\sum_{i=0}^{n-1}\left( x_{i+1}-x_{i}\right) ^{4}\left\{
\left\vert f^{\prime \prime \prime }(x_{i+1})\right\vert \left( \mu
_{1,q}\right) ^{\frac{1}{q}}+\left\vert f^{\prime \prime \prime
}(x_{i})\right\vert \left( \mu _{2,q}\right) ^{\frac{1}{q}}\right\} 
\end{equation*}%
where%
\begin{equation*}
\mu _{1,q}=\frac{2K_{1}^{\frac{q}{2}}\left( q\ln K_{1}-6\right) }{\left(
q\ln K_{1}\right) ^{2}}+\frac{48K_{1}^{\frac{q}{2}}\left( q\ln
K_{1}-2\right) }{\left( q\ln K_{1}\right) ^{4}}+\frac{96}{\left( q\ln
K_{1}\right) ^{4}},
\end{equation*}%
\begin{equation*}
\mu _{2,q}=\frac{2M_{1}^{\frac{q}{2}}\left( q\ln M_{1}-6\right) }{\left(
q\ln M_{1}\right) ^{2}}+\frac{48M_{1}^{\frac{q}{2}}\left( q\ln
M_{1}-2\right) }{\left( q\ln M_{1}\right) ^{4}}+\frac{96}{\left( q\ln
M_{1}\right) ^{4}}
\end{equation*}%
and $K_{1},M_{1}$ are as defined in Proposition \ref{prop 3.1}.
\end{proposition}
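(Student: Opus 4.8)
The plan is to mirror the proofs of Propositions \ref{prop 3.1} and \ref{prop 3.2} verbatim, replacing the single-interval estimate that drives them with Corollary \ref{co 1.3.}. The whole argument is a localize-then-sum scheme: establish the bound on each piece of the partition, rescale by the length of that piece, and add up. No genuinely new idea beyond the $q\geq 1$ power-mean estimate of Theorem \ref{teo 2.3} is needed.

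First I would apply Corollary \ref{co 1.3.} on each subinterval $[x_i,x_{i+1}]$ of the division $d$, making the substitution $a\to x_i$, $b\to x_{i+1}$. Since $\left\vert f^{\prime\prime\prime}\right\vert^{q}$ is $\log$-convex on the whole of $[a,b]$, it is in particular $\log$-convex on every subinterval, so the hypotheses of the corollary are met there. Writing $K_1,M_1$ for the endpoint ratios of Proposition \ref{prop 3.1} and letting $\mu_{1,q},\mu_{2,q}$ be $\mu_{K,q},\mu_{M,q}$ evaluated at $K_1,M_1$, this produces
\begin{equation*}
\left\vert \frac{1}{x_{i+1}-x_{i}}\int_{x_{i}}^{x_{i+1}}f(x)dx-f\left( \frac{x_{i}+x_{i+1}}{2}\right) \right\vert \leq \frac{\left( x_{i+1}-x_{i}\right) ^{3}}{96}\left( \frac{1}{4}\right) ^{1-\frac{1}{q}}\left\{ \left\vert f^{\prime \prime \prime }(x_{i+1})\right\vert \left( \mu _{1,q}\right) ^{\frac{1}{q}}+\left\vert f^{\prime \prime \prime }(x_{i})\right\vert \left( \mu _{2,q}\right) ^{\frac{1}{q}}\right\} .
\end{equation*}

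Next I would multiply both sides by $\left( x_{i+1}-x_{i}\right)$, which converts the left-hand side into $\left\vert \int_{x_{i}}^{x_{i+1}}f(x)dx-f\left( \tfrac{x_{i}+x_{i+1}}{2}\right)\left( x_{i+1}-x_{i}\right)\right\vert$ and raises the power on the right to $\left( x_{i+1}-x_{i}\right)^{4}$. Summing over $i$ from $0$ to $n-1$ and applying the triangle inequality on the left gives
\begin{equation*}
\left\vert \int_{a}^{b}f(x)dx-M(f,d)\right\vert \leq \frac{1}{96}\left( \frac{1}{4}\right) ^{1-\frac{1}{q}}\sum_{i=0}^{n-1}\left( x_{i+1}-x_{i}\right) ^{4}\left\{ \left\vert f^{\prime \prime \prime }(x_{i+1})\right\vert \left( \mu _{1,q}\right) ^{\frac{1}{q}}+\left\vert f^{\prime \prime \prime }(x_{i})\right\vert \left( \mu _{2,q}\right) ^{\frac{1}{q}}\right\} .
\end{equation*}
Because $E(f,d)=\int_{a}^{b}f(x)dx-M(f,d)$ by definition of the midpoint rule, the left-hand side is precisely $\left\vert E(f,d)\right\vert$, and the claimed estimate is obtained.

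I do not expect any real obstacle here: the analytic content was already carried out on a single interval inside Theorem \ref{teo 2.3}, so the proposition is merely its additive counterpart over the partition. The only point demanding care is bookkeeping — correctly transporting the constant $\left(\frac{1}{4}\right)^{1-1/q}$ and the $\mu$-terms from the generic endpoints $a,b$ to the subinterval endpoints $x_i,x_{i+1}$, and noting that the rescaling yields $\left(x_{i+1}-x_i\right)\cdot\left(x_{i+1}-x_i\right)^{3}=\left(x_{i+1}-x_i\right)^{4}$ as written. As a consistency check, setting $q=1$ collapses the factor $\left(\frac{1}{4}\right)^{1-1/q}$ to $1$ and recovers Proposition \ref{prop 3.1}, in agreement with Remark \ref{rem 1.1}.
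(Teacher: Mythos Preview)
Your proposal is correct and follows exactly the paper's own approach: the paper's proof consists of the single line ``The proof can be maintained by using Corollary \ref{co 1.3.} like Proposition \ref{prop 3.1},'' and you have carried this out in full detail. The localize-then-sum scheme you describe --- apply Corollary \ref{co 1.3.} on each $[x_i,x_{i+1}]$, multiply through by $x_{i+1}-x_i$, sum, and invoke the triangle inequality --- is precisely what the paper intends.
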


\begin{proof}
The proof can be maintained by using Corollary \ref{1.3.} like Proposition %
\ref{prop 3.1}.
\end{proof}


\begin{thebibliography}{9}
\bibitem{1} M. Alomari and M. Darus, On the Hadamard's inequality for $\log
- $convex functions on the coordinates, Journal of Inequalities and
Applications, Volume 2009, Article ID 283147, 13 pages.

\bibitem{2} X. Zhang and W. Jiang, Some properties of $\log -$convex
function and applications for the exponential function, Computers and
Mathematics with Applications 63 (2012) 1111--1116.

\bibitem{3} B. G. Pachpatte, A note on integral inequalities involving two
log-convex functions, Mathematical Inequalities \& Applications, vol. 7, no.
4, pp. 511--515, 2004.

\bibitem{4} J. Pe\v{c}ari\'{c} and A. U. Rehman, On logarithmic convexity
for power sums and related results, Journal of Inequalities and
Applications, vol. 2008, Article ID 389410, 9 pages, 2008.

\bibitem{5} Y. Shuang, Y. Wang and F. Qi, Some inequalities of
Hermite-Hadamard type for functions whose third derivatives are $(\alpha
,m)- $convex, J. Computational Analysis and Applications, Vol. 17, No:2,
2014.
\end{thebibliography}
\end{document}